\newtheorem{theorem}{Theorem}
\theoremstyle{plain}
\newtheorem{lemma}{Lemma}
\numberwithin{equation}{section}
\numberwithin{lemma}{section}
\numberwithin{theorem}{section}
\numberwithin{corollory}{section}
\numberwithin{proposition}{section}
\begin{document}

		\title{Unit Groups  of group algebras of certain quasidihedral group}
		\author{
			Suchi Bhatt and Harish Chandra  
			\\	Department of Mathematics and Scientific Computing\\  Madan Mohan Malaviya University of Technology \\Gorakhpur 273010 (U.P.), India.\\
			\textit{E-Mail}   hcmsc@mmmut.ac.in}
		\date{}			
		\maketitle
		
		\begin{abstract}
				Let $F_{q}$ be any finite field of characteristic $p>0$ having $q = p^{n}$ elements. In this paper, we have obtained the complete structure of unit groups of group algebras $F_{q}[QD_{2^k}]$,  for   $k = 4$ and $5$, for any prime  $p>0$, where $QD_{2^k}$ is quasidihedral group of order $2^k$.
			\end{abstract}

		\smallskip
		\noindent  \textit{Mathematics Subject Classification \text{(2010)}}: 16S34, 17B30.
		
		\smallskip
		\noindent \text{Key words:}\; Group algebras, Unit groups,  Jacobson radical.

      \section{Introduction}
    Let $FG$ be the group algebra of a group $G$ over a field $F$. Let $N$ be the normal subgroup of $G$.  The  natural homomorphism $g\mapsto gN$ such that $G \mapsto G/N$ can be extended to an algebra homomorphism from $FG \mapsto F[G/N]$ defined by 
    $$ \sum_{g\in G} a_{g}g \rightarrow \sum_{g\in G} a_{g}gN  $$ for $a_{g}\in F$.
    The kernal of this $F$-algebra homomorphism is $\omega(N)$, which is an ideal generated by $\{ x-1, \, x\in N\}$ in $FG$. 
    The augmentation ideal $\omega(FG)$ of the group algebra $FG$ is defined as:
    $$ \omega(FG) = \bigg \{  \sum_{g\in G} a_{g}g \in FG \, |  \,  a_{g} \in F, \sum_{g\in G} a_{g} = 0 \bigg\}  $$
    Obviously, 
    $  \frac{FG}{\omega(N)} \cong  F(\frac{G}{N})$.
    It is observed  that $\omega(N) =\omega(FN)FG = FG\omega(FN)$. Now $\frac{FG}{\omega(G)} \cong F$ implies that $J(FG)\subseteq \omega(FG)$, where $J(FG)$ is Jacobson radical of $FG$. It is well known that for an ideal $I\subseteq J(FG)$, the natural homomorphism $FG$ to ${FG}/{I}$ induces an epimorphism from the unit group of $FG$, $U(FG)$ to $U({FG}/{I})$  with kernal  $1+I$ and 
    $$\frac{U(FG)}{1+I} \cong   U \left ( \frac{FG}{I}\right )$$                                
    We use $V_{1}$ as kernal of the epimorphism $U(FG)$ to $U({FG}/{I})$ and $V_{1} =  1+ J(FG)$.
    For any group $G$ with $g_{1}, g_{2} \in G$ the commutator $(g_{1},g_{2}) = g^{-1}_{1}g^{-1}_{2}g_{1}g_{2}$.
    The lower central series of $G$ is defined as $$G = \gamma_{1}(G) \supseteq \gamma_{2}(G) \supseteq... \supseteq \gamma_{m}(G) \supseteq...$$ where $\gamma_{c+1}(G) = (\gamma_{c}(G), G)$ is the group generated by $(g,h)$,  $g\in \gamma_{c}(G), \, h\in G$, for $c\geq 1$. A group $G$ is said to be nilpotent of class $c$ if $\gamma_{c+1}(G) = 1$ but $\gamma_{c}(G)\neq 1$.\\
    
    Let $F$ be a finite field of characteristic $p>0$.  An element $g\in G$ is called $p$-regular if $(p,o(g)) = 1$, where Char$F$ = $p>0$.
    Suppose $m$ be the L.C.M. of the orders of  $p$-regular elements of G and $\eta$ be a primitive $m$-th root of unity.  Now if $T$ be the multiplicative group consisting of those integers $t$, taken modulo $m$ such that  $\eta  \rightarrow \eta^{t}$ is an  F-automorphism of $F(\eta)$ over F. Any  two $p$-regular elements $g_1, g_2 \in G$ are said to be $F$-conjugate if $g_1^{t} = x^{-1}g_2x$, for some $x\in G$ and $t\in T$. 
    It gives an equivalence relation which partitions the $p$-regular elements of $G$  into $p$-regular, $F$-conjugacy classes. In accordance of  Witt-Berman theorem \cite[Ch.17, Theorem 5.3]{gkarp1}, we have the number of non-isomorphic simple $FG$-modules  equal to the number of $F$-conjugacy classes of $p$-regular elements of G.
    
    Problem based on the structure of unit group $U(FG)$  has  generated considerable interest. A lot of papers has been appeared in this direction  (see \cite{Lc,jg1,joeg,kaukhan,khanss2,nmss01,mss1,mss2,PPS1,Cpm1}).  Sharma and  Srivastava  \cite{ssm3,ssm4,khanss1}  have obtained the  structure of the unit group of $FG$ for $G = S_3, \, S_4\, $ and $A_4$.
    
    In this paper we have obtained  the structure of unit groups of group algebras of quasi dihedral groups $QD_{16}$ and  $QD_{32}$ of order 16 and 32 respectively. The  presentation of quasidihedral groups are given as
    $$ QD_{2^k} = < a, x\,    |   \, a^{2^{k-1}} = x^2 = 1, \,  xax = a^{2^{k-2}-1}  >.$$
   
    The  distinct conjugacy classes of $QD_{16}$ are  ${C}_{1} = \{1\}$, ${C}_{2} = \{a^{4}\}$, ${C}_{3} = \{a^{2}, a^{6}\}$, ${C}_{4} = \{a, a^{3}\}$, ${C}_{5} = \{a^{5}, a^{7}\}$, ${C}_{6} = \{x, a^{2}x, a^{4}x, a^{6}x\}$, ${C}_{7} = \{ax, a^{3}x, a^{5}x, a^{7}x\}$. Also $\widehat{{C}_{i}} $, denotes the class sum of ${C}_{i}$, where $ 1\leq i \leq 7$.\\
    
     The  distinct conjugacy classes of $QD_{32}$ are  ${C'}_{1} = \{1\}$, ${C'}_{2} = \{a^{8}\}$, ${C'}_{3} = \{a, a^{7}\}$, $r{C'}_{4} = \{a^{\pm2}\}$, ${C'}_{5} = \{a^{3}, a^{5}\}$, ${C'}_{6} = \{a^{\pm4}\}$, ${C'}_{7} = \{a^{\pm6}\}$,  ${C'}_{8} = \{a^{-1}, a^{-7}\}$, ${C'}_{9} = \{a^{-3}, a^{-5}\}$, \sloppy ${C'}_{10} = \{x,a^{2}x,a^{4}x,a^{6}x,a^{8}x,a^{10}x,a^{12}x,a^{14}x\}$, ${C'}_{11} = \{ax,a^{3}x,a^{5}x,a^{7}x,a^{9}x,a^{11}x,a^{13}x,a^{15}x\}$. Also $\widehat{{C'}_{j}} $, denotes the class sum of ${C'}_{j}$, where $ 1\leq j \leq 11$.\\
     
    Throughout the paper,  $M(n, F)$ denotes the algebra of all $n\times n$ matrices over $F$, $F^{*}= F \-\setminus \{0\}$,
    $GL(n,F)$ is the general linear group of degree $n$ over $F$, $CharF$ is the characteristic of $F$ and $C_{n}$ is the cyclic group of order $n$.
    \section{Preliminaries}
    In this section, we give a complete characterization of unit group $U(F_{q}[QD_{2^k}])$, for $k= 4$ and $5$ having char$F_{q}$ =  $p>0$.
    
    We shall use the following results frequently  in our work.
    
    \begin{lemma} \cite{Cpm2} \label{l1}
    	Let $G$ be a group and  $R$ be a commutative ring. Then the set of all finite class sums forms an $R$-basis of $ \zeta(RG)$, the center
    	of $RG$.
    	
    \end{lemma}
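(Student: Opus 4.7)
The plan is to verify the two defining properties of a basis: that the finite class sums are $R$-linearly independent, and that they span $\zeta(RG)$. Both will follow from the fact that conjugacy classes partition $G$ together with the defining property of $RG$ that $G$ itself is an $R$-basis.

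For independence, I would observe that distinct conjugacy classes are disjoint subsets of $G$, so the class sums $\widehat{C_i}$ for distinct classes $C_i$ are supported on disjoint parts of the basis $G$. Any relation $\sum_i r_i \widehat{C_i} = 0$ with finitely many $r_i \in R$ then forces the coefficient of each $g \in C_i$ in $RG$ to be $r_i = 0$. This step is essentially bookkeeping.

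The substantive step is spanning. Take $\alpha = \sum_{g \in G} a_g g \in \zeta(RG)$; then $h^{-1}\alpha h = \alpha$ for every $h \in G$. Rewriting the left side as $\sum_g a_g (h^{-1}gh)$ and comparing coefficients against the $R$-basis $G$ yields $a_{hgh^{-1}} = a_g$ for all $g,h$. Hence $g \mapsto a_g$ is constant on each conjugacy class of $G$. Grouping the sum by conjugacy classes, and using that $\alpha$ has finite support so that only finitely many classes carry a nonzero coefficient, one writes $\alpha$ as a finite $R$-linear combination of class sums.

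The one point that needs care, and which I would flag as the main subtlety, is why only \emph{finite} class sums appear. Because $\alpha \in RG$ has finite support, any conjugacy class on which the constant coefficient $a_g$ is nonzero must itself be finite; otherwise $\alpha$ would have infinite support. This is precisely the reason the statement restricts to finite class sums, and it ensures that the expansion lies in $RG$ rather than in some completion.
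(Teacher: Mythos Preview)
Your argument is correct and is the standard proof of this classical result. Note, however, that the paper does not actually supply its own proof of this lemma: it is simply quoted from Milies and Sehgal \cite{Cpm2}, so there is no ``paper's proof'' to compare against beyond the textbook argument, which is precisely the one you have written.

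One small point worth tightening: your two steps establish that the finite class sums are linearly independent and that every central element decomposes as a combination of them, but you never explicitly check that a finite class sum itself lies in $\zeta(RG)$. This is of course immediate---conjugation by any $h\in G$ permutes a conjugacy class $C$, so $h^{-1}\widehat{C}\,h=\widehat{C}$ and hence $\widehat{C}$ commutes with every group element, and (since $R$ is commutative) with all of $RG$---but it should be stated to make the basis claim complete.
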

    
    \begin{lemma} \cite{Cpm2} \label{l2}
    	Let $FG$ be a semi-simple group algebra. If $G'$ denotes the
    	commutator subgroup of $G$, then
    	$$  FG = FG_{e_{G'}} \oplus \Delta(G,G')$$
    	where $FG_{e_{G'}} \cong F(G/G')$ is the sum of all commutative simple components of $FG$ and $\Delta(G,G')$ is the sum of all the others. 
    	
    \end{lemma}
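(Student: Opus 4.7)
The plan is to construct $e_{G'}$ explicitly and exhibit the decomposition via this central idempotent. Since $FG$ is semisimple, Maschke's theorem forces $\operatorname{char} F \nmid |G|$, so the element $e_{G'} = \frac{1}{|G'|}\sum_{h\in G'} h$ lies in $FG$. I first verify that $e_{G'}^{2}=e_{G'}$ and that $e_{G'}$ is central: centrality is immediate from the normality of $G'$ (it is a sum over a union of conjugacy classes, hence lies in $\zeta(FG)$ by Lemma~\ref{l1}), and the idempotent identity follows from $h\cdot e_{G'}=e_{G'}$ for every $h\in G'$. Consequently $1=e_{G'}+(1-e_{G'})$ is an orthogonal decomposition into central idempotents, giving the two-sided ideal decomposition
$$FG \;=\; FG\,e_{G'} \;\oplus\; FG(1-e_{G'}),$$
and I take $\Delta(G,G'):=FG(1-e_{G'})$ as the intended complement.

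Next, I would identify the first summand with $F(G/G')$. The relation $h\cdot e_{G'}=e_{G'}$ for $h\in G'$ shows that the $F$-linear map $FG\to FG\,e_{G'}$, $x\mapsto xe_{G'}$, annihilates the ideal $\omega(G')=\omega(FG')FG$ described in the introduction, so it factors through $FG/\omega(G')\cong F(G/G')$. A dimension count, where coset representatives of $G'$ in $G$ yield a spanning set of $FG\,e_{G'}$ that is easily seen to be linearly independent, shows the induced map $F(G/G')\to FG\,e_{G'}$ is in fact an isomorphism.

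The remaining task is the characterization of commutative simple components. Since $G/G'$ is abelian, $F(G/G')$, and therefore $FG\,e_{G'}$, is a commutative semisimple algebra, so all of its Wedderburn components are field extensions of $F$. Conversely, if $L=FG\,e$ is a commutative simple component cut out by a primitive central idempotent $e$, the associated irreducible representation $\rho\colon G\to L^{\times}$ is one-dimensional, hence factors through the abelianization $G/G'$; this forces $h\cdot e=e$ for each $h\in G'$, so $\widehat{G'}\,e=|G'|\,e$, i.e.\ $e_{G'}e=e$, and therefore $L\subseteq FG\,e_{G'}$. Combining both inclusions shows $FG\,e_{G'}$ is precisely the sum of the commutative simple components, and $\Delta(G,G')$ collects all the others.

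I expect the delicate step to be the converse direction, namely arguing that every commutative simple component lies inside $FG\,e_{G'}$; this is where one must invoke the fact that commutativity of a Wedderburn component over $F$ forces one-dimensionality of the associated representation (each element acts by a scalar on a $1$-dimensional $F$-vector space after extending scalars), which in turn forces it to factor through $G/G'$ and hence to be absorbed by $e_{G'}$. The rest of the argument is bookkeeping with the central idempotent.
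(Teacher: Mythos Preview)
Your argument is correct and is in fact the standard proof of this result. Note, however, that the paper does not give its own proof of this lemma at all: it is quoted from \cite{Cpm2} (Milies--Sehgal) and stated without proof, so there is nothing in the paper to compare your approach against.

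One small remark on the step you flag as delicate. Your phrasing ``each element acts by a scalar on a $1$-dimensional $F$-vector space after extending scalars'' is slightly off: a commutative simple component $L$ is a finite field extension of $F$, and the corresponding simple module is $L$ itself, which is $[L:F]$-dimensional over $F$, not $1$-dimensional. What you actually need (and what you essentially say earlier) is only that the image of $\rho\colon G\to L^{\times}$ is abelian, since $L$ is a field; this alone forces $G'\subseteq\ker\rho$, hence $h\cdot e=e$ for all $h\in G'$ and $e_{G'}e=e$. No appeal to one-dimensionality over $F$ or to scalar extension is required.
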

    \begin{lemma}\cite[Theorem 7.2.7]{PPS2} \label{l3}
    	Let $H$ be a normal subgroup of $G$ with $[G : H] = n < \infty$. Then $(J(FG))^n \subseteq J(FH)FG \subseteq J(FG)$. If in addition, $n\neq 0$ in
    	$F$, then  $ J(FG) = J(FH)FG$.
    \end{lemma}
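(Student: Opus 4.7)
The plan is to establish the two containments separately, then treat the refinement as a clean Maschke-type corollary, exploiting throughout that $FG$ is a free $FH$-module of rank $n$ on any set of coset representatives and that normality of $H$ makes $J(FH)$ stable under conjugation by $G$.

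First I would verify that $J(FH)FG$ is a two-sided ideal of $FG$. It is obviously a right ideal, and it is a left ideal because for each $g \in G$ conjugation by $g$ is an $F$-algebra automorphism of $FH$ (using $gHg^{-1}=H$) and the Jacobson radical is stable under any automorphism; hence $gJ(FH)g^{-1}=J(FH)$ and $J(FH)FG = FG\cdot J(FH)$. To get $J(FH)FG \subseteq J(FG)$, I would show this ideal annihilates every simple left $FG$-module $M$. Regarded as an $FH$-module, $J(FH)M$ is an $FH$-submodule, but the $G$-invariance of $J(FH)$ promotes it to an $FG$-submodule of $M$; by irreducibility it is $0$ or $M$. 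The latter, combined with Nakayama's lemma applied to the finitely generated $FH$-module $M$ (finite-dimensional over $F$ in the setting of the paper), forces $M=0$, a contradiction. Thus $J(FH)M=0$ for every simple $M$, and the inclusion follows.

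For $(J(FG))^n \subseteq J(FH)FG$, I would pass to the quotient $A=FG/J(FH)FG$, which is naturally a crossed product of the semisimple ring $\overline{FH}=FH/J(FH)$ by the group $G/H$ of order $n$. The image $\overline{J(FG)}$ is an ideal of $A$, and one needs to bound its nilpotency index by $n$. My plan is to fix a transversal $\{t_1,\dots,t_n\}$ of $H$ in $G$, write each element of $FG$ uniquely as $\sum t_i\beta_i$ with $\beta_i\in FH$, and track how products of $n$ elements of $J(FG)$ collapse modulo $J(FH)$ by reasoning on the $\overline{FH}$-bimodule structure of $A$ and the fact that $\overline{FH}$ has vanishing radical. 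This combinatorial bookkeeping inside the crossed product is where I expect the main obstacle to lie; in particular, one has to argue that the multiplicative twists coming from the transversal do not obstruct the collapse, which is what forces the exponent to be exactly $n$.

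Finally, under the hypothesis $n\neq 0$ in $F$, the averaging operator $e=\tfrac{1}{n}\sum_{i} t_i\otimes t_i^{-1}$ (suitably interpreted in the crossed product) provides a Maschke-style splitting, so $A$ is itself semisimple over $\overline{FH}$ and hence $J(A)=0$. This gives $\overline{J(FG)}=0$, that is, $J(FG)=J(FH)FG$, completing the lemma.
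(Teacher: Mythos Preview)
The paper does not supply a proof of this lemma at all; it is simply quoted as \cite[Theorem~7.2.7]{PPS2} from Passman's book, so there is no in-paper argument to compare your proposal against. Your outline is in fact the standard route taken in that reference: show $J(FH)FG$ is a $G$-stable two-sided ideal, check it kills every irreducible $FG$-module, reduce the other inclusion to a crossed product $\overline{FH}*(G/H)$ over the Jacobson-semisimple base $\overline{FH}$, and finish the invertible-index case by a Maschke averaging.

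Two points deserve tightening. First, the Nakayama step requires that an irreducible $FG$-module $M$ be finitely generated over $FH$; this is true in full generality (not only in the finite-dimensional setting you parenthetically invoke) because $M=FG\cdot m$ for any nonzero $m$, and $FG=\sum_{i=1}^{n} t_i\,FH$ exhibits $M$ as generated over $FH$ by $t_1 m,\dots,t_n m$. Second, and more substantively, the inclusion $(J(FG))^n\subseteq J(FH)FG$ is exactly the place where you say you ``expect the main obstacle to lie,'' and indeed your sketch stops short of an argument there. The needed fact is that in a crossed product $R*G$ with $|G|=n$ and $J(R)=0$ one has $J(R*G)^n=0$; Passman proves this by showing, via the coordinate projections $R*G\to R$, that any element of $J(R*G)$ has all its $R$-coefficients nilpotent in a controlled way, and then iterating. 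Your ``combinatorial bookkeeping'' remark points at this but does not supply it, so as written the proposal is a correct plan with its central technical lemma still to be filled in. (Also, the averaging element you write as $\tfrac{1}{n}\sum_i t_i\otimes t_i^{-1}$ is not literally an element of the crossed product; what you want is the averaged projection $\tfrac{1}{n}\sum_i t_i\,p\,t_i^{-1}$ of an $FH$-linear splitting $p$, which yields the $FG$-linear splitting and hence $J(\overline{FH}*(G/H))=0$.)
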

    
    \begin{lemma} \cite[Lemaa 1.17]{gka2} \label{l4}
    	Let G be a locally finite $p$-group and let $F$ be a  field of characteristic $p$. Then $J(FG) = \omega(FG)$.
    \end{lemma}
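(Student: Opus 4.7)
The plan is to establish the two inclusions $J(FG)\subseteq\omega(FG)$ and $\omega(FG)\subseteq J(FG)$ separately. The first is immediate and already noted in the introduction of the paper: since $FG/\omega(FG)\cong F$ is a field, $\omega(FG)$ is a maximal ideal of $FG$ and therefore contains the Jacobson radical.

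For the reverse inclusion, my approach is to show that $\omega(FG)$ is a nil ideal, which then forces $\omega(FG)\subseteq J(FG)$ since every (one-sided) nil ideal lies in the Jacobson radical. Given an arbitrary $\alpha=\sum_{g\in G} a_g g\in\omega(FG)$, I would let $H$ be the subgroup of $G$ generated by the finite support of $\alpha$. Local finiteness of $G$ makes $H$ finite, and since $G$ is a $p$-group so is $H$. We have $\alpha\in\omega(FH)\subseteq FH\subseteq FG$, so the task reduces to showing that $\omega(FH)$ is a nilpotent ideal of $FH$ whenever $H$ is a finite $p$-group and $\mathrm{char}\,F=p$.

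The main step is therefore this finite $p$-group case, which I would prove by induction on $|H|$. The base case $|H|=1$ is trivial. For the inductive step, pick a central subgroup $Z\leq H$ of order $p$ (which exists since $H$ is a nontrivial $p$-group); if $z$ generates $Z$, then $(z-1)^{p}=z^{p}-1=0$ in characteristic $p$, so $\omega(FZ)$ is nilpotent. The kernel of the natural map $FH\to F[H/Z]$ is $\omega(FZ)\cdot FH$, which is a two-sided ideal (because $Z$ is central) whose powers are dominated by powers of $\omega(FZ)$, hence is nilpotent. Combining this with the inductive hypothesis applied to the smaller $p$-group $H/Z$ gives that $\omega(FH)$ itself is nilpotent. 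Thus $\alpha^{n}=0$ for some $n$, $\omega(FG)$ is nil, and the lemma follows.

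The main obstacle is the finite $p$-group fact, and specifically verifying that the quotient step in the induction is compatible with nilpotency; this is handled cleanly by the centrality of $Z$, which ensures $\omega(FZ)\cdot FH=FH\cdot\omega(FZ)$ so that the relevant ideal behaves like a tensor factor rather than requiring any more delicate filtration argument.
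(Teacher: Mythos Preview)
The paper does not supply a proof of this lemma; it is quoted verbatim as \cite[Lemma 1.17]{gka2} and used as a black box. Your argument is correct and is in fact the standard proof one finds in the cited reference: the inclusion $J(FG)\subseteq\omega(FG)$ is immediate from $FG/\omega(FG)\cong F$, and for the reverse inclusion one reduces via local finiteness to the finite $p$-group case and shows $\omega(FH)$ is nilpotent by induction on $|H|$, peeling off a central (or merely normal) subgroup $Z$ of order $p$ at each step. One small comment: centrality of $Z$ is convenient for seeing $(\omega(FZ)FH)^n=\omega(FZ)^n FH$ at a glance, but normality alone already gives $\omega(FZ)FH=FH\omega(FZ)$ (as the paper's introduction notes for general normal subgroups), so the induction goes through either way.
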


     \section{Unit Group of $F_{q}[QD_{16}]$}

    \begin{theorem} \label{t1}
    	Let $U(F_{q}[QD_{16}])$ be the unit group of group algebra $F_{q}[QD_{16}]$ of quasidihedral group of order $16$ over any finite field of positive characteristic $p$. Let $V = 1+J(F_{q}[QD_{16}])$, where $J(F_{q}[QD_{16}])$ denotes the Jacobson radical of the group algebra $F_{q}[QD_{16}]$.
    	\begin{enumerate}
    		\item If $|F_q| = 2^n$, then \\
    		(a) $\frac{U(F_{q}[QD_{16}])}{V_{1}}$ $\cong C_{q-1}$.\\
    		
    		(b) $V$ is 2-group of order $2^{15n}$ and exponent 8.\\
    		
    		(c) Nilpotency  class of $V$ is 4.\\
    		
    		(d) $V$ is centrally metabelian.  \\
    		
    		\item If $|F_{q}| = p^n$ where $p>2$, then $U(F_{q}[QD_{16}])$ is isomorphic to:\\
    		
    		(a) $C^{4}_{q-1} \times  GL(2,F_{q})^{3}$,  if $q\equiv  1$ or $3 \, mod\,8$.\\
    		
    		(b) $C^{4}_{q-1} \times  GL(2,F_{q})\times GL(2,F_{q^2})$,                          if $q\equiv  -1$ or $-3 \,\,mod\,8$.
    	\end{enumerate}
    	
    \end{theorem}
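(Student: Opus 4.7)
My plan is to split by characteristic: when $p = 2$ I handle the problem through the Jacobson radical and augmentation ideal of the $2$-group algebra (Lemma \ref{l4}), and when $p > 2$ I use semisimplicity together with the Witt--Berman count of $F_q$-conjugacy classes of $p$-regular elements.

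For Part (1), since $QD_{16}$ is a $2$-group and $\mathrm{char}\,F_q = 2$, Lemma \ref{l4} gives $J(F_q[QD_{16}]) = \omega(F_q[QD_{16}])$, so the augmentation map forces $F_q[QD_{16}]/J \cong F_q$ and hence $U(F_q[QD_{16}])/V_1 \cong F_q^* \cong C_{q-1}$, which is (a). Since $\dim_{F_q}\omega = 15$, I read off $|V| = 2^{15n}$. For the exponent I use the characteristic-$2$ identity $(1+w)^{2^k} = 1 + w^{2^k}$, with lower bound $(1 + (a-1))^4 = a^4 \ne 1$ and the upper bound $w^8 = 0$ coming from the Jennings filtration and the fact that $QD_{16}$ has exponent $8$, giving exponent exactly $8$. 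For (c)--(d), I would work with the filtration $V \supseteq 1+\omega^2 \supseteq 1+\omega^3 \supseteq \cdots$ and the standard inclusion $[1+\omega^i, 1+\omega^j] \subseteq 1+\omega^{i+j}$; since $QD_{16}$ itself is nilpotent of class $3$, one expects $\gamma_5(V) = 1$ but $\gamma_4(V) \ne 1$, and centrally metabelian then follows by analysing how $V''$ sits inside $1+\omega^4$ and verifying that this subgroup lies in the centre of $V$.

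For Part (2), $\gcd(16, p) = 1$ makes $F_q[QD_{16}]$ semisimple. From $xax = a^3$ I compute $(a,x) = a^2$, so $QD_{16}' = \langle a^2\rangle \cong C_4$ and $QD_{16}/QD_{16}' \cong C_2 \times C_2$; hence $F_q[QD_{16}/QD_{16}'] \cong F_q^4$, which via Lemma \ref{l2} peels off the factor $C_{q-1}^4$ of the unit group. The remaining $12$-dimensional ideal $\Delta(QD_{16}, QD_{16}')$ is decomposed via Witt--Berman with $m = 8$ and $T = \langle q \bmod 8\rangle \le (\mathbb{Z}/8\mathbb{Z})^*$: for $q \equiv 1, 3 \pmod 8$ every conjugacy class of $QD_{16}$ is $T$-stable, giving $7$ simple summands and $\Delta \cong M_2(F_q)^3$, whereas for $q \equiv -1, -3 \pmod 8$ the classes $\{a,a^3\}$ and $\{a^5,a^7\}$ fuse, leaving $6$ summands, and a dimension count $8 = 2\cdot 2^2$ forces $\Delta \cong M_2(F_q) \oplus M_2(F_{q^2})$. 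Taking units of each matrix factor then yields (a) and (b).

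The most delicate step will be Part (1)(c)--(d): the $\omega$-filtration supplies only upper bounds, so ruling out $\gamma_5(V) \ne 1$ and proving $V'' \subseteq Z(V)$ requires explicit commutator computations inside $V$ rather than generic filtration estimates. In Part (2), the subtler point is confirming that the fused class in the cases $q \equiv -1, -3 \pmod 8$ yields precisely $M_2(F_{q^2})$ rather than some exotic division algebra, which rests ultimately on Wedderburn's theorem that finite division rings are fields.
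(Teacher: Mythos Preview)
Your plan coincides with the paper's argument almost exactly. For Part 2 the paper does precisely what you outline: Maschke gives semisimplicity, Lemma \ref{l2} together with $QD_{16}/QD_{16}'\cong C_2\times C_2$ splits off $F_q^{\,4}$, the Witt--Berman count with $T=\langle q\bmod 8\rangle$ gives $c=7$ or $c=6$ according as $q\equiv 1,3$ or $q\equiv -1,-3\pmod 8$, and the dimension constraint forces every remaining block to be $2\times 2$. For Part 1(a)--(b) your route is in fact a little cleaner than the paper's: you go straight from Lemma \ref{l4} and the augmentation map to $F_q[QD_{16}]/J\cong F_q$, whereas the paper detours through the normal subgroup $D_8$ to reach the same conclusion.

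The only point that needs tightening is 1(c). The filtration inclusion $[1+\omega^i,1+\omega^j]\subseteq 1+\omega^{i+j}$ gives $\gamma_5(V)\subseteq 1+\omega^5$, but the nilpotency index of $\omega(F_q[QD_{16}])$ is $9$ (e.g.\ $(a-1)^7(x-1)\neq 0$), so $\omega^5\neq 0$ and the filtration alone does not yield class $\le 4$. You already flag that ``explicit commutator computations'' are required; the paper supplies these by invoking iterated commutator identities from \cite{gkarp2}, computing $u_1=(1+a,1+b)$, $u_2=(u_1,1+b)$, $u_3=(u_2,1+b)$ modulo successive powers of $J$ and verifying $u_4\equiv 1$. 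That, rather than the $\omega$-filtration, is the mechanism you will need to carry out (c), and similarly for the inclusion $V'\subseteq \zeta(F_q[QD_{16}])$ used in (d).
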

    \begin{proof}
    	\textbf{1.(a)} Let Char$F_{q}$ = 2 and $|F_{q}| = 2^n$. Since $D_{8}$ is normal subgroup of $QD_{16}$ of index 2, which is not invertible. We have  $\frac{QD_{16}}{D_{8}} \cong C_{2}$,  
    	$F_{q}C_{2} \cong F_{q}[\frac{QD_{16}}{D_{8}}] \cong \frac{F_{q}[QD_{16}]}{\omega{(D_{8})}}$, so that $dim_{F_{q}}(\omega(D_{8})) = 14$. Further $\omega(D_{8})^5 = 0$ and as $QD_{16}$ is a 2-group, therefore by Lemma \ref{l4}, $\omega(F_{q}[QD_{16}]) = J(F_{q}[QD_{16}])$. Hence 
    	$$ J(F_{q}C_{2}) \cong J \left( \frac{F_{q}[QD_{16}]}{\omega(D_{8})} \right) \cong \frac{J(F_{q}[QD_{16}])}{\omega(D_{8})}.$$
    	
    	It is known that $dim_{F_{q}}(J(F_{q}C_{2})) = 1$ and $J(F_{q}C_{2})^2 = 0$, which implies that $dim_{F_{q}}(J(F_{q}[QD_{16}])) = 15$ and $ J(\frac{F_{q}[QD_{16}]}{\omega(D_{8})})^2 = 0$. Hence we have $dim_{F_{q}}(\frac{F_{q}[QD_{16}]}{J(F_{q}[QD_{16}])}) = 1$. Since $\frac{F_{q}[QD_{16}]}{J(F_{q}[QD_{16}])}$ is commutative, therefore $$\frac{U(F_{q}[QD_{16}])}{V} \cong F^*_{q}\cong C_{q-1}.$$
    	Now the complete description for $V$ is as follows:
    	
    	\textbf{(b)} It can be easily  seen that $y^8 =1$, for all $y\in V$, hence exponent of $V$ is 8.
    	
    	\textbf{(c)} Since $V = 1+J(F_{q}[QD_{16}])$ and $dim(J(F_{q}[QD_{16}])) = 15$ therefore $|V| = 2^{15n}$ and $V$ is a 2-group.  Now let $a, \, b \in J(F_{q}[QD_{16}])$. 
    	If we take $u_{1} \equiv (1+a, 1+b)$, then $u_{1} \equiv 1+ba((a,b)-1)$  mod $(J^3)$,
    	$u_{2}= (u_{1}, 1+b)\equiv 1+b^2a(a,b)((u_1,b) -1)$ mod $(J^5)$,
    	$u_{3}=(u_{2}, 1+b)\equiv 1+b^3a(a,b)((u_1,b)-1)((u_{2},b)-1)$  mod $(J^7)$ and $u_{4} \equiv 1 $, therefore $\gamma_{5}(V_1) = 1$  (cf. see \cite{gkarp2}). Hence $V$ is nilpotent group of class 4.\\
    	
    	\textbf{(d)} Now $\frac{U(F_{q}[QD_{16}])}{V}$ is an abelian group, thus $U(F_{q}[QD_{16}])'\subseteq V$. Also $U(F_{q}[QD_{16}])'' \subseteq V^{'} \subseteq \zeta(F_{q}[QD_{16}])$, therefore $U(F_{q}[QD_{16}])$ is centrally metabelian. \\
    	
    	\textbf{2.} Let Char$F_{q} = p \,(>2)$. Now since $p$ does not divides $|QD_{16}|$, therefore by Maschke's theorem $F_{q}[QD_{16}]$ is semi-simple over $F_{q}$ and thus $J(F_{q}[QD_{16}]) = 0$. Hence $ \frac{F_{q}[QD_{16}]}{J(F_{q}[QD_{16}])} \cong F_{q}[QD_{16}]$. Now using Wedderburn structure theorem, we have:
    	$$ F_{q}[QD_{16}] \cong  (\bigoplus^{c}_{i=1}  M(n_{i},K_{i}))$$
    	where $K_{i}$' s are finite dimensional division algebras over $F_{q}$ and hence $K_{i}$' s are  finite extensions of $F_{q}$.\\ 
    	Since $p>2$, therefore $F_{q}\big(\frac{QD_{16}}{QD'_{16}}\big) \cong F_{q}(C_{2}\times C_{2}) \cong F_{q}\oplus F_{q}\oplus F_{q}\oplus F_{q}$. Thus by  Wedderburn structure theorem and by Lemma \ref{l2}, we have $$ F_{q}[QD_{16}] \cong F_{q}\oplus F_{q}\oplus F_{q}\oplus F_{q} \oplus  (\bigoplus^{r}_{i=1}  M(n_{i},K_{i})).$$
    	Hence $$\zeta(F_{q}[QD_{16}])\cong F_{q}\oplus F_{q}\oplus F_{q}\oplus F_{q} \oplus \big(\bigoplus^{r}_{i=1}  K_{i})$$
    	and by Lemma \ref{l1}, $dim_{F_{q}}(\zeta(F_{q}[QD_{16}])) = 7$, therefore  $\sum_{i=1}^{r} [K_{i} : F_{q}] = 7-4 = 3$.
    	
    	Now  for any $s\in N$, $x^{q^s} = x$,  $\forall x\in \zeta(F_{q}[QD_{16}])$ if and only if $\widehat{{C}^{q^s}_{i}} = \widehat{{C}_{i}}$, for all $1\leq i\leq 7$. This holds good if and only if $8|q^s-1$ or $8|q^s+1$. Suppose $K_{i}^* = <y_{i}>$ for all $i$, $1\leq i\leq r$, then $x^{q^s} = x$,  $\forall x\in \zeta(F_{q}[QD_{16}])$ if and only if $y^{q^{s}}_{i} = 1$, which is equivalent to $[K_{i}:F_{q}]|s$, for all $1\leq i\leq r$. Hence the least number $l$ such that $8|q^l-1$ or $8|q^l+1$ can be given as
    	$$l = l.c.m.\{[K_{i}:F_{q}] |  1\leq i \leq r \}.$$
    	By  calculation, we have the following possibilities for $q$
    	\begin{enumerate}
    		\item  If $q \equiv 1 \, mod\, 8$, then $l = 1$,
    		\item If $q \equiv -1 \, mod\, 8$, then $l = 2$,
    		\item If $q \equiv 3 \, mod\, 8$, then $l = 1$,
    		\item If $q \equiv -3 \, mod\, 8$, then $l = 2$.
    	\end{enumerate}
    	
    	To find the number of simple components in the Wedderburn  decomposition of $F_{q}[QD_{16}]$, we apply the Witt-Berman theorem. Here $m =8$. Let $c\,(=r+4)$ is the number of simple components. First we will find $T$ and $p$-regular $F_{q}$ conjugacy classes as described in introduction. \\
    	\textbf{(a)} $\mathbf{q \equiv 1 \, mod \, 8}$\\
    	$T \equiv \,\{1\} \, mod \, 8$. Thus $ C_{i}$, $1 \leq i \leq 7$ will be $p$ regular $F_{q}$-conjugacy classes and hence $c= 7$.\\
    	\textbf{(b)} $\mathbf{q \equiv -1 \, mod \, 8}$\\
    	$ T \equiv \,\{1,7\} \, mod \, 8$. Thus  $\{1\}$, $\{a^{4}\}$, $\{a,a^{3},a^{5},a^{7} \}$, $\{a^{2},a^{6} \}$, \sloppy $\{x,a^{2}x,a^{4}x,a^{6}x \}$, $\{ax,a^{3}x,a^{5}x,a^{7}x \}$ will be $p$ regular $F_{q}$-conjugacy classes and hence $c= 6$.\\
    	\textbf{(c)} $\mathbf{q \equiv 3 \, mod \, 8}$\\
    	$ T \equiv \,\{1, 3\} \, mod \, 8$. Thus $\{1\}$, $\{a^{4}\}$, $\{a,a^{3} \}$, $\{a^{2},a^{6} \}$, $\{a^{5},a^{7} \}$, $\{x,a^{2}x,a^{4}x,a^{6}x \}$, $\{ax,a^{3}x,a^{5}x,a^{7}x \}$ will be $p$ regular $F_{q}$-conjugacy classes and hence $c= 7$.\\
    	\textbf{(d)}$\mathbf{q \equiv -3 \, mod \, 8}$\\
    	$ T \equiv \,\{1, 5\} \, mod \, 8$. Thus $\{1\}$, $\{a^{4}\}$, $\{a,a^{3},a^{5},a^{7} \}$, $\{a^{2},a^{6} \}$, $\{x,a^{2}x,a^{4}x,a^{6}x \}$, $\{ax,a^{3}x,a^{5}x,a^{7}x \}$ will be $p$ regular $F_{q}$-conjugacy classes and hence $c= 6$.\\
    	Thus from the above cases, we have following possibilities for $S = ([K_{i}:F_{q}])^{r}_{i = 1}$, depending  on $q$:
    	\begin{enumerate}
    		\item  $q \equiv 1 \, mod \, 8 \implies S \, = (1,1,1)$,
    		\item  $q \equiv -1 \, mod \, 8 \implies S \, = (1,2)$,
    		\item  $q \equiv 3 \, mod \, 8 \implies S \, = (1,1,1)$,
    		\item  $q \equiv -3 \, mod \, 8 \implies S \, = (1,2)$.
    	\end{enumerate}
    	Due to  dimensions constraint $n_{i} = 2$, $\forall 1\leq i\leq r$. Therefore
    	\[
    	F_{q}[QD_{16}] \cong \left.
    	\begin{cases}
    	F_{q} \oplus F_{q}\oplus  F_{q}\oplus F_{q}\oplus  M(2,F_{q})^{3},  & \text{if},    q\equiv \, 1\, \text{or} \,3\, mod\,8\\
    	
    	F_{q} \oplus F_{q}\oplus F_{q}\oplus F_{q} \oplus  M(2,F_{q})\oplus M(2,F_{q^{2}}),                                                   & \text{if}, q\equiv \, -1 \, \text{or} \, -3\, mod\,8

    	\end{cases} \right. \]
    
    	Hence the result follows.
    	\end{proof}
    	  \section{Unit Group of $F_{q}[QD_{32}]$}
    	  \begin{theorem}
    	  	Let $U(F_{q}[QD_{32}])$ be the unit group of group algebra $F_{q}[QD_{32}]$ of quasidihedral group of order $32$ over any finite field of positive characteristic $p$. Let $V_{1} = 1+J(F_{q}[QD_{32}])$, where $J(F_{q}[QD_{32}])$ denotes the Jacobson radical of the group algebra $F_{q}[QD_{32}]$.
    	  	\begin{enumerate}
    	  		\item If $|F_q| = 2^n$, then \\
    	  		(a) $\frac{U(F_{q}[QD_{32}])}{V_{1}}$ $\cong C_{q-1}$.\\
    	  		
    	  		(b) $V_{1}$ is 2-group of order $2^{31n}$ and exponent 16.\\

    	  		\item If $|F_{q}| = p^n$ where $p>2$, then $U(F_{q}[QD_{32}])$ is isomorphic to:\\
    	  		
    	  		 (a) $C^{4}_{q-1} \times  GL(2,F_{q})^{7}$,  if $q\equiv  1$ or $ 7 \, mod\,16$.\\
    	  		
    	  		(b) $C^{4}_{q-1} \times  GL(2,F_{q})^{3}\times GL(2,F_{q^2})^{2}$,                          if $q\equiv  -1$ or $-7 \,\,mod\,16$.\\
    	  		
    	  		 (c) $C^{4}_{q-1} \times  GL(2,F_{q})\times GL(2,F_{q^2}) \times GL(2,F_{q^4})$,                          if $q\equiv  \pm3$ or $\pm5 \,\,mod\,16$.\\
    	  		 
    	  	\end{enumerate}
    	  	
    	  \end{theorem}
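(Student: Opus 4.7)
The plan is to mirror, almost verbatim, the two-case strategy already used in Theorem \ref{t1} for $QD_{16}$, adjusting only for the larger group.

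For the modular case $p=2$, my first step would be to locate a normal subgroup of $QD_{32}$ of index $2$ whose group algebra I can handle, and the natural choice is the dihedral subgroup $D_{16}=\langle a^{2},x\rangle$ (one checks $x(a^{2})x=a^{14}=(a^{2})^{-1}$, so it is dihedral of order $16$, and it is normal because it contains the derived subgroup). Then $QD_{32}/D_{16}\cong C_{2}$ gives $F_{q}[QD_{32}]/\omega(D_{16})\cong F_{q}C_{2}$, so $\dim_{F_{q}}\omega(D_{16})=30$. Since $QD_{32}$ is a $2$-group, Lemma \ref{l4} gives $J(F_{q}[QD_{32}])=\omega(F_{q}[QD_{32}])$, and the isomorphism $J(F_{q}C_{2})\cong J(F_{q}[QD_{32}])/\omega(D_{16})$ together with $\dim J(F_{q}C_{2})=1$ yields $\dim J(F_{q}[QD_{32}])=31$. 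That gives $|V_{1}|=2^{31n}$ and, because $F_{q}[QD_{32}]/J$ is a one-dimensional field extension, $U/V_{1}\cong F_{q}^{*}\cong C_{q-1}$. For the exponent $16$ claim, I would argue in the style used for $\gamma_{5}$ in Theorem \ref{t1}(c): produce an element of order $16$ (for instance $1+(a-1)$, since $a$ itself has order $16$), and then show $(1+\alpha)^{16}=1$ for every $\alpha\in J$ by iterating the identity $(1+\alpha)^{2}\equiv 1+\alpha^{2}\pmod{2\alpha J+\alpha J\alpha}$ in characteristic $2$ and using a bound on the nilpotency index of $J$.

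For the semisimple case $p>2$, Maschke's theorem kills the radical, so the task is a Wedderburn count. I would first identify the derived subgroup: $[a,x]=a^{-1}(xax)=a^{6}$, and $\langle a^{6}\rangle=\langle a^{2}\rangle$ has order $8$, so $QD_{32}/QD_{32}'\cong C_{2}\times C_{2}$. Lemma \ref{l2} then splits off four copies of $F_{q}$, leaving $F_{q}[QD_{32}]\cong F_{q}^{4}\oplus\bigoplus_{i=1}^{r}M(n_{i},K_{i})$. Lemma \ref{l1} says the centre has dimension equal to the number of conjugacy classes, $11$, forcing $\sum[K_{i}:F_{q}]=7$, and the total-dimension constraint $32=4+\sum n_{i}^{2}[K_{i}:F_{q}]$ combined with $\sum[K_{i}:F_{q}]=7$ forces every $n_{i}=2$. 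So the whole computation reduces to determining, for each residue of $q\bmod 16$, how many $F_{q}$-conjugacy classes of $p$-regular elements there are (this gives $c=4+r$, hence $r$), and then which degrees $[K_{i}:F_{q}]$ appear, which is the l.c.m.\ argument used in Theorem \ref{t1}.

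The bookkeeping step, computing $T\subseteq (\mathbb{Z}/16)^{*}$ and then the fusion it induces on the eleven conjugacy classes $C'_{1},\dots,C'_{11}$, is where the proof becomes distinctly more delicate than in the $QD_{16}$ case and is the main obstacle. I would organise it in the four sub-cases suggested by the statement: $q\equiv\pm1\pmod{16}$ (where $T$ has order $1$ or $2$, and only the $\pm$ pairs among $\{C'_{3},C'_{8}\}$ and $\{C'_{5},C'_{9}\}$ can merge), and $q\equiv\pm3,\pm5\pmod{16}$ (where $T=\{1,3,9,11\}$ or $\{1,5,9,13\}$ has order $4$ and produces larger orbits among the cyclic-part classes). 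Checking that the classes $C'_{10}$ and $C'_{11}$ of reflections and quasi-reflections are always stable under $T$ (using that their elements have order $2$ and $4$ respectively, and that $(a^{2j+1}x)^{t}$ lies back in $C'_{11}$) avoids a potential pitfall. Once the counts $c=11,11,7,7$ are established for the four sub-cases, the degree sequences $(1,1,1,1,1,1,1)$, $(1,1,1,2,2)$, and $(1,2,4)$ follow from the l.c.m.\ condition identifying the smallest $l$ with $16\mid q^{l}\pm1$, and the stated decompositions into $GL(2,\cdot)$ factors are immediate.
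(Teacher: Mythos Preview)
Your overall strategy matches the paper's. The one structural variation is in part~1: the paper takes the cyclic subgroup $C_{16}=\langle a\rangle$ as the index-$2$ normal subgroup rather than your $D_{16}=\langle a^{2},x\rangle$; either choice works and the rest of the argument is identical (the paper, incidentally, simply asserts $y^{16}=1$ for $y\in V_{1}$ without the nilpotency-index sketch you give).

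In part~2, however, your case analysis has a gap and an internal inconsistency. You treat only the residues $q\equiv\pm1,\pm3,\pm5\pmod{16}$ and omit $q\equiv\pm7$ entirely, even though these appear explicitly in cases (a) and (b) of the statement. Moreover, your claimed counts ``$c=11,11,7,7$'' cannot all be correct: the degree sequence $(1,1,1,2,2)$ that you yourself quote has $r=5$, hence $c=9$, and no $9$ appears in your tally. The correct accounting, carried out in the paper residue by residue, is: for $q\equiv1$ or $7$ one has $T=\{1\}$ or $\{1,7\}$, neither of which fuses any of the eleven classes, so $c=11$; for $q\equiv-1$ or $-7$ one has $T=\{1,15\}$ or $\{1,9\}$, each merging $C'_{3}$ with $C'_{8}$ and $C'_{5}$ with $C'_{9}$, so $c=9$; for $q\equiv\pm3,\pm5$ the order-$4$ subgroup $T$ yields $c=7$. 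You correctly note that for $q\equiv-1$ the pairs $\{C'_{3},C'_{8}\}$ and $\{C'_{5},C'_{9}\}$ merge, but then fail to decrement the count from $11$ to $9$; and the case $q\equiv7$, where $T=\{1,7\}$ stabilises every class (since $7$ already appears as the conjugation exponent in $QD_{32}$), is simply missing.
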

      
    	  \begin{proof}
    	  \textbf{1(a)} Let Char$F_{q}$ = 2, then $|F_{q}| = 2^n$. Since $C_{16}$ is normal subgroup of $QD_{32}$, which is not invertible. Therefore  $\frac{QD_{32}}{C_{16}} \cong C_{2}$,  
    	  $F_{q}C_{2} \cong F_{q}[\frac{QD_{32}}{C_{16}}] \cong \frac{F_{q}[QD_{32}]}{\omega{(C_{16})}}$ and then  $dim_{F_{q}}(\omega(C_{16})) = 30$. Now as $QD_{32}$ is a 2-group, therefore by Lemma \ref{l4}, $\omega(F_{q}[QD_{32}]) = J(F_{q}[QD_{32}])$. Hence 
    	  $$ J(F_{q}C_{2}) \cong J \left(\frac{F_{q}[QD_{32}]}{\omega(C_{16})}\right) \cong \frac{J(F_{q}[QD_{32}])}{\omega(C_{16})}.$$
    	  	It is known that $dim_{F_{q}}(J(F_{q}C_{2})) = 1$ and $J(F_{q}C_{2})^2 = 0$, which implies that $dim_{F_{q}}(J(F_{q}[QD_{32}])) = 31$ and $ J(\frac{F_{q}[QD_{32}]}{\omega(C_{16})})^2 = 0$. Hence we have $dim_{F_{q}}(\frac{F_{q}[QD_{32}]}{J(F_{q}[QD_{32}])}) = 1$. Since $\frac{F_{q}[QD_{32}]}{J(F_{q}[QD_{32}])}$ is abelian, therefore $$\frac{U(F_{q}[QD_{32}])}{V_1} \cong F^*_{q}\cong C_{q-1}.$$
    	  The complete description for $V_1$ is as follows:\\
    	 \textbf{1(b)} Since $dim_{F_{q}}(J(F_{q}[QD_{32}])) = 31$, therefore $V_1$ is a 2-group of order $2^{31n}.$ Also for every element $y \in V_{1}$, we have $y^{16} =1$. Thus exponent of $V_1$ is 16. Now as $\frac{U(F_{q}[QD_{32}])}{V_1}$ is an abelian group, thus $U(F_{q}[QD_{32}])'\subseteq{V_1}$ and hence $U(F_{q}[QD_{32}])''\subseteq{V'_1}$.\\
    	  
    	   \textbf{2.} Let Char$F_{q} = p \,(>2)$. Now as $p$ does not divides $|QD_{32}|$, thus by Maschke's theorem $F_{q}[QD_{32}]$ is semi-simple over $F_{q}$ and then $J(F_{q}[QD_{32}]) = 0$. Hence $ \frac{F_{q}[QD_{32}]}{J(F_{q}[QD_{32}])} \cong F_{q}[QD_{32}]$. By using Wedderburn structure theorem, we have:
    	  $$ F_{q}[QD_{32}] \cong  (\bigoplus^{c}_{i=1}  M(n_{i},K_{i}))$$
    	  where $K_{i}$' s are finite dimensional division algebras over $F_{q}$ and hence $K_{i}$' s are  finite extensions of $F_{q}$. \\
    	  Since $p>2$, therefore $F_{q}\big(\frac{QD_{32}}{QD'_{32}}\big) \cong F_{q}(C_{2}\times C_{2}) \cong F_{q}\oplus F_{q}\oplus F_{q}\oplus F_{q}$. Therefore from  Wedderburn structure theorem and by Lemma \ref{l2}, we have $$ F_{q}[QD_{32}] \cong F_{q}\oplus F_{q}\oplus F_{q}\oplus F_{q} \oplus  (\bigoplus^{r}_{i=1}  M(n_{i},K_{i})).$$
    	  Hence $$\zeta(F_{q}[QD_{32}])\cong F_{q}\oplus F_{q}\oplus F_{q}\oplus F_{q} \oplus \big(\bigoplus^{r}_{i=1}  K_{i})$$
    	  and by Lemma \ref{l1}, $dim_{F_{q}}(\zeta(F_{q}[QD_{16}])) = 11$, therefore  $\sum_{i=1}^{r} [K_{i} : F_{q}] = 11-4 = 7$.
    	  
    	  Now with similar arguments  as in Theorem \ref{t1},  we have  the least number $l$ such that $16|q^l-1$ or $16|q^l+1$,  can be given as
    	  $$l = l.c.m.\{[K_{i}:F_{q}] |  1\leq i \leq r \}.$$
    	  By  calculation, we have the following possibilities for $q$
    	  \begin{enumerate}
    	  	\item  If $q \equiv 1$ or  $7\, mod\, 16$, then $l = 1$,
    	  	\item If $q \equiv -1$ or $-7 \,mod\, 16$, then $l = 2$,
    	  	\item If $q \equiv \pm3$ or  $\pm 5\, mod\, 16$, then $l = 4$.
    	  	
    	  \end{enumerate}
      Now we will find  out the number of simple components in the Wedderburn decomposition of $F_{q}[QD_{32}]$. We  use the Witt-Berman theorem. Here $m = 16$. If $c\,  (= r+4)$ be the number of simple components, then we will find $T$ and $p$-regular $F_{q}$ conjugacy classes as described in introduction.\\
      \textbf{(a)} $\mathbf{q \equiv 1 \, mod \, 16}$\\
      $T \equiv \,\{1\} \, mod \, 16$. Thus $ C'_{j}$, $1 \leq j \leq 11$ will be $p$ regular $F_{q}$-conjugacy classes and hence $c= 11$.\\
      \textbf{(b)} $\mathbf{q \equiv -1 \, mod \, 16}$\\
      $ T \equiv \,\{-1,1\} \, mod \, 16$. Thus  $\{1\}$, $\{a^{\pm1}, a^{\pm7}\}$, $\{a^{\pm2}\}$, $\{a^{\pm3},a^{\pm5} \}$,   $\{a^{\pm4}\}$, $\{a^{\pm6}\}$ $\{a^{8}\}$,    $\{x,a^{2}x,a^{4}x,a^{6}x,a^{8}x,a^{10}x,a^{12}x,a^{14}x  \}$, $\{ax,a^{3}x,a^{5}x,a^{7}x,a^{9}x,a^{11}x,a^{13}x,a^{15}x \}$  will be $p$ regular $F_{q}$-conjugacy classes and hence $c= 9$.\\
      	\textbf{(c)} $\mathbf{q \equiv 3 \, or\, -5\, mod \, 16}$\\
      $ T \equiv \,\{1, 3, 9, 11\} \, mod \, 16$. Thus $\{1\}$, $\{a^{\pm1}, a^{\pm3}, a^{\pm5}, a^{\pm7}\}$, $\{a^{\pm2},a^{\pm6}\}$, $\{a^{\pm4} \}$,  \sloppy  $\{a^{8}\}$, $\{x,a^{2}x,a^{4}x,a^{6}x,a^{8}x,a^{10}x,a^{12}x,a^{14}x  \}$, $\{ax,a^{3}x,a^{5}x,a^{7}x,a^{9}x,a^{11}x,a^{13}x,a^{15}x \}$  will be $p$ regular $F_{q}$-conjugacy classes and hence $c= 7$.\\
      \textbf{(d)} $\mathbf{q \equiv -3 \, or\, 5\, mod \, 16}$\\
      $ T \equiv \,\{1, 5, 9, 13\} \, mod \, 16$. Thus $\{1\}$, $\{a^{\pm1}, a^{\pm3}, a^{\pm5}, a^{\pm7}\}$, $\{a^{\pm2},a^{\pm6}\}$, $\{a^{\pm4} \}$,  \sloppy  $\{a^{8}\}$, $\{x,a^{2}x,a^{4}x,a^{6}x,a^{8}x,a^{10}x,a^{12}x,a^{14}x  \}$, $\{ax,a^{3}x,a^{5}x,a^{7}x,a^{9}x,a^{11}x,a^{13}x,a^{15}x \}$  will be $p$ regular $F_{q}$-conjugacy classes and hence $c= 7$.\\
      \textbf{(e)} $\mathbf{q \equiv 7 \, mod \, 16}$\\
     $T \equiv \,\{1,7\} \, mod \, 16$. Thus $ C'_{j}$, $1 \leq j \leq 11$ will be $p$ regular $F_{q}$-conjugacy classes and hence $c= 11$.\\
     \textbf{(f)} $\mathbf{q \equiv -7 \, mod \, 16}$\\
     $ T \equiv \,\{1,9\} \, mod \, 16$. Thus  $\{1\}$, $\{a^{\pm1}, a^{\pm7}\}$, $\{a^{\pm2} \}$, $\{a^{\pm4}\}$,  $\{a^{\pm3},a^{\pm5} \}$, $\{a^{\pm6}\}$, $\{a^{8}\}$ \sloppy  $\{x,a^{2}x,a^{4}x,a^{6}x,a^{8}x,a^{10}x,a^{12}x,a^{14}x  \}$, $\{ax,a^{3}x,a^{5}x,a^{7}x,a^{9}x,a^{11}x,a^{13}x,a^{15}x \}$ will be $p$ regular $F_{q}$-conjugacy classes and hence $c= 9$.\\
     	Thus from the above cases, we have following possibilities for $S = ([K_{i}:F_{q}])^{r}_{i = 1}$, depending  on $q$:
     \begin{enumerate}
     	\item  $q \equiv 1$ or $7\, mod \, 16 \implies S \, = (1,1,1,1,1,1,1)$,
     	\item  $q \equiv -1$ or $-7\, mod \, 16 \implies S \, = (1,1,1,2,2)$,
     	\item $q \equiv \pm3$ or $\pm5\, mod \, 16 \implies S \, = (1,2,4)$.

     \end{enumerate}
 
 	Due to  dimensions constraint $n_{i} = 2$, $\forall 1\leq i\leq r$. Therefore
 \[
 F_{q}[QD_{32}] \cong \left.
 \begin{cases}
 F_{q} \oplus F_{q}\oplus  F_{q}\oplus F_{q}\oplus  M(2,F_{q})^{7},  & \text{if},    q\equiv \, 1\, \text{or} \,7\, mod\,16\\
 
 F_{q} \oplus F_{q}\oplus F_{q}\oplus F_{q} \oplus  M(2,F_{q})^3\oplus M(2,F_{q^{2}})^2,                                                   & \text{if}, q\equiv \, -1 \, \text{or} \, -7\, mod\,16\\
 
  F_{q} \oplus F_{q}\oplus F_{q}\oplus F_{q} \oplus  M(2,F_{q})\oplus M(2,F_{q^{2}}) \oplus M(2,F_{q^{4}}),                                                   & \text{if}, q\equiv \, \pm 3 \, \text{or} \, \pm5\, mod\,16

 \end{cases} \right. \]

Hence the result follows.
    \end{proof}
     
\end{document}